\numberwithin{equation}{section}
\newtheorem{theorem}{Theorem} \newtheorem{lemma}{Lemma}[section]
\newtheorem{propo}{Proposition}[section]
\newtheorem{fact}{Fact}[section]
\newtheorem{question}{Question}[section]\newtheorem{example}{Example}[section]
\newtheorem{remark}{Remark}[section]
\def\P{\mathbb{P}}
\newcommand{\Ind}[1]{\mathbf{1}_{  \{#1\} }}
\newcommand{ \hit}{ t_{\mathrm{hit}} }
\newcommand{\E}{{\mathbb{E}}}
\renewcommand{\Pr}{ \mathrm P}
\newcommand{ \rel}{ t_{\mathrm{rel}} }
\newcommand{ \mix}{ t_{\mathrm{mix}} }
\newcommand{ \mixin}{ t_{\mathrm{mix}}^{(\infty)} }
\newcommand{ \TV}{ \mathrm{TV} }
\newcommand{\gd}{\delta}
\newcommand{\eps}{\epsilon}
\newcommand{\la}{\lambda}
\DeclareMathSymbol{\leqslant}{\mathalpha}{AMSa}{"36} 
\DeclareMathSymbol{\geqslant}{\mathalpha}{AMSa}{"3E} 
\DeclareMathSymbol{\eset}{\mathalpha}{AMSb}{"3F}     
\renewcommand{\le}{\;\leqslant\;}                   
\renewcommand{\ge}{\;\geqslant\;}                   
\renewcommand{\epsilon}{\varepsilon}
\newcommand{\sfrac}[2]{\mbox{\small $\frac{#1}{#2}$}}
\newcommand{\ssfrac}[2]{\mbox{\footnotesize $\frac{#1}{#2}$}}
\newcommand{\half}{\ssfrac{1}{2}}
\newcommand{\N}{\mathbb N}
\newcommand{\R}{\mathbb R}
\newcommand{\Z}{\mathbb Z}
\begin{document}

\title{A spectral characterization for concentration of the cover time}
\author{ Jonathan Hermon
\thanks{
University of Cambridge, Cambridge, UK. E-mail: {\tt jonathan.hermon@statslab.cam.ac.uk}. Financial support by
the EPSRC grant EP/L018896/1.}}
\date{}
\maketitle

\begin{abstract}
We prove that for a sequence of finite vertex-transitive graphs of increasing sizes, the cover times are asymptotically concentrated  iff the product of the spectral-gap and the expected cover time diverges. In fact, we prove this for general reversible Markov chains under the much weaker assumption (than transitivity) that the maximal hitting time of a state is of the same order as the average hitting time.

\end{abstract}

\paragraph*{\bf Keywords:}
{\small Mixing times, hitting times,  cover times, vertex-transitive graphs,  spectral-gap.
}

\tableofcontents

\section{Introduction}

A big part of the modern theory of  Markov chains  is dedicated to the study of the hierarchy of different quantities associated with a Markov chain. It is a common theme that certain phenomena can be characterized by a simple criterion concerning whether or not two such quantities are strongly separated (i.e., are of strictly different orders). Often, one of these quantities is the inverse of the spectral-gap. One instance is the cutoff phenomenon and the condition that the product of the mixing-time and the spectral-gap diverges, known as the \emph{product condition} (a necessary condition for precutoff in total-variation \cite[Proposition 18.4]{LPW} and  a necessary and sufficient condition for cutoff in $L_2$ \cite{Chen}). The condition that the product of the spectral-gap and the maximal (expected)  hitting time diverges is studied in \cite{aldoushitting}  and \cite[Theorem 1]{hermoninter}).    

Aldous'  classic criterion for  concentration of the   cover time \cite{aldouscover} is another such instance. Aldous' criterion asserts that for a sequence of  Markov chains on finite state spaces of diverging sizes  $\sfrac{ \tau_{\mathrm{cov}}^{(n)}}{ t_{\mathrm{cov}}^{(n)}} \to 1  $ in distribution if $t_{\mathrm{cov}}^{(n)}/\hit^{(n)} \to \infty $, where throughout the superscript `$(n)$' indicates that we are considering the $n$th Markov chain in the sequence, and where  $\tau_{\mathrm{cov}}=\inf\{t:\{X_s:s \le t \}=V \} $ is the \emph{cover time} of a Markov chain $(X_t )_{t \ge 0} $ on a finite state space $V$, defined to be the first time by which every state was visited at least once by the chain,    \[t_{\mathrm{cov}}:=\max_{x \in V  }\E_x[\tau_{\mathrm{cov}}]\]  its worst-case expectation and \[\hit:=\max_{x,y}\E_x[T_y], \qquad \text{where} \qquad T_y:=\inf\{t:X_t=y \}, \] is the maximal expected \emph{hitting time} of a state. More precisely,  Aldous \cite[Proposition 1]{aldouscover} showed that in the reversible case if $\hit^{(n)} =\Omega( t_{\mathrm{cov}}^{(n)})$ then there exists a sequence of initial states such that $\tau_{\mathrm{cov}}^{(n)}/t_{\mathrm{cov}}^{(n)} $ does not concentrate around any value.\footnote{The proposition is phrased for simple random walk, but the proof works for general reversible Markov chains. For a non-reversible counter-example consider a walk on the cycle with a fixed clockwise bias.}\footnote{In fact, by \cite[p.\ 274]{LPW}  starting from the stationary distribution $\pi$ one has that $\P_{\pi}[T_y>\hit/4] \ge 1/4$ for some state $y$. By \eqref{e:poscorfortail} $\P_{\pi}[T_y>i\hit] \ge(\P_{\pi}[T_y>\hit/4])^{4i} \ge 2^{-8i} $.}  Conversely, (even without reversibility) $t_{\mathrm{cov}}^{(n)} \le \min_y \E_{y}[\tau_{\mathrm{cov}}^{(n)}]+\hit^{(n)} $  and when   $t_{\mathrm{cov}}^{(n)}/\hit^{(n)} \to \infty $ we have that $\sfrac{t_{\mathrm{cov}}^{(n)} }{ \min_y \E_{y}[\tau_{\mathrm{cov}}^{(n)}]} \to 1 $ and that  $\sfrac{ \tau_{\mathrm{cov}}^{(n)}}{ t_{\mathrm{cov}}^{(n)}} \to 1  $ in distribution  for every sequence of initial states. 

Our Theorem \ref{prop:cover} refines Aldous' criterion in the transitive setup by allowing one to replace the maximal hitting time in his result by the inverse of the spectral-gap, which is positioned much lower in the aforementioned hierarchy of Markov chain parameters (see \eqref{e:mixlehit}). Throughout, let  $\mathrm{gap}:=\la_2 $ be the \textit{spectral-gap} of the considered chain, and  $\rel:=\frac{1}{\la_2}$ its \emph{relaxation-time}, where   $0=\la_1<\la_2 \le \ldots \la_{|V|} \le 2 $ are the eigenvalues of the Laplacian $I-P$. When considering simple random walk (\emph{SRW}) on a graph $G$ we often add parenthesis '$(G)$' to various quantities.\begin{theorem}
\label{prop:cover}
Let $G_{n}$ be a sequence of  finite connected vertex-transitive graphs of diverging sizes. Then $\sfrac{ \tau_{\mathrm{cov}}(G_n)}{ t_{\mathrm{cov}}(G_n)} \to 1  $ in distribution iff $\mathrm{gap}(G_n)t_{\mathrm{cov}}(G_n) \to \infty $.
\end{theorem}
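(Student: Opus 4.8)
The plan is to deduce Theorem~\ref{prop:cover} from the quantitative version of Aldous' criterion recalled in the introduction, by showing that in the vertex-transitive setting the two conditions $\mathrm{gap}(G_n)t_{\mathrm{cov}}(G_n)\to\infty$ and $t_{\mathrm{cov}}(G_n)/H(G_n)\to\infty$ are in fact equivalent. Once this equivalence is established, the ``if'' direction follows from the converse half of Aldous' result (which needs no reversibility), and the ``only if'' direction follows from the contrapositive: if $\mathrm{gap}(G_n)t_{\mathrm{cov}}(G_n)\not\to\infty$ then along a subsequence $t_{\mathrm{cov}}(G_n)=O(H(G_n))$, and Aldous' Proposition~1 (valid for reversible chains, hence for SRW) tells us the cover time does not concentrate along that subsequence.

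The core of the argument is therefore the claim that for SRW on a vertex-transitive graph $G$ on $N$ vertices one has $H(G) \asymp N/\mathrm{gap}(G)$ up to absolute constants, or at least $H(G)=O(t_{\mathrm{rel}}\cdot(\text{something}\le t_{\mathrm{cov}}))$, together with the reverse bound $H(G)=\Omega(1/\mathrm{gap})$ which is the general spectral lower bound on hitting times. For the upper bound I would use the eigenfunction expansion of the Green's function / the standard identity expressing $\E_\pi[T_y]$ in terms of the spectral decomposition: $\E_\pi[T_y]=\frac{1}{\pi(y)}\sum_{i\ge 2}\frac{|f_i(y)|^2}{1-\lambda_i}$, and in the transitive case $\pi$ is uniform and, by averaging over the transitive action, $\sum_i |f_i(y)|^2$ is controlled uniformly in $y$; this yields $\max_y \E_\pi[T_y] = O(N/\mathrm{gap})$. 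Then, since transitivity gives $H \asymp \max_y \E_\pi[T_y]$ (the extra factor for worst-case starting point is bounded in transitive graphs), we get $H(G)=O(N(G)/\mathrm{gap}(G))$. Combined with the Matthews-type bound $t_{\mathrm{cov}}(G) \ge c\, N(G) \log N(G)$ times a mixing correction --- or more simply with the elementary fact that in a transitive graph $t_{\mathrm{cov}} \ge c \max_y \E_\pi[T_y] \log N \ge c' H \log N$ --- we obtain $t_{\mathrm{cov}}(G_n)/H(G_n) \ge c'\log N(G_n) \cdot \frac{\mathrm{gap}(G_n) t_{\mathrm{cov}}(G_n)}{t_{\mathrm{cov}}(G_n)\mathrm{gap}(G_n)}$; rearranged, $\mathrm{gap}\cdot t_{\mathrm{cov}} \to \infty$ forces $t_{\mathrm{cov}}/H \to \infty$ unless $H \asymp t_{\mathrm{cov}}$, and conversely $t_{\mathrm{cov}}/H\to\infty$ together with $H=\Omega(1/\mathrm{gap})$ forces $\mathrm{gap}\cdot t_{\mathrm{cov}}\to\infty$. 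The logarithmic factor is what makes the equivalence work: it absorbs the gap between $1/\mathrm{gap}$ (which could be as small as $H/N$, off by a factor $N$) and $H$ itself, because $t_{\mathrm{cov}}$ always carries a $\log N$ that $1/\mathrm{gap} \le H \le t_{\mathrm{cov}}/\log N$ cannot.

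The main obstacle I anticipate is the precise control $H(G)\asymp N(G)/\mathrm{gap}(G)$ in transitive graphs --- in particular the lower bound $H(G) = \Omega(N/\mathrm{gap})$, which is \emph{not} true for all graphs and genuinely uses transitivity (e.g. via the fact that on a transitive graph the stationary distribution is uniform and hitting times of ``typical'' points are comparable, so one cannot have a small hitting time to every vertex while the relaxation time is large). One clean route is: for transitive $G$, $\frac{1}{N}\sum_y \E_\pi[T_y] = \sum_{i\ge2}\frac{1}{1-\lambda_i} \ge \frac{1}{\mathrm{gap}}$, and transitivity plus a second-moment/variance argument upgrades this average to a comparable \emph{maximum}, giving $H \ge \max_y \E_\pi[T_y] \ge \frac{1}{N}\sum_y\E_\pi[T_y] \ge 1/\mathrm{gap}$ --- but to get the factor $N$ one must instead note $\E_x[T_y] \ge (1-o(1)) N \mathbb{E}_\pi[\text{return structure}]$, or invoke the known result (Aldous--Fill, or the commute-time identity on transitive graphs) that $\max_{x,y}\E_x[T_y] \ge c\, N\, t_{\mathrm{rel}}^{1/2}$ is the wrong scaling and the right statement is the target-time identity $\sum_y \pi(y)\E_\pi[T_y]$ being dimension-free. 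I would look to the literature on transitive graphs (Aldous--Fill Chapter~on reversible chains, and work of Oliveira/Peres) for the sharp statement $H(G)\asymp N(G) t_{\mathrm{rel}}(G)$, and if only one inequality is available cleanly, arrange the final comparison so that only that inequality is needed. The rest --- plugging into Aldous' criterion and handling subsequences --- is routine.
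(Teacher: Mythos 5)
Your reduction to Aldous' criterion is the same first step the paper takes, and one half of it is fine: since $\rel\precsim\mixin\precsim H$ by \eqref{e:trelmix}--\eqref{e:mixlehit}, $t_{\mathrm{cov}}/H\to\infty$ does force $\mathrm{gap}\cdot t_{\mathrm{cov}}\to\infty$, and the non-concentration along a subsequence with $t_{\mathrm{cov}}=O(H)$ follows from Aldous' Proposition~1 exactly as you say. The gap is in the other implication, which is where the paper's actual content lies. You need: $\mathrm{gap}\cdot H\to\infty$ (along a subsequence where $t_{\mathrm{cov}}\asymp H$) implies $t_{\mathrm{cov}}\gg H$. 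Both of the facts you propose to use for this are false, and the $n$-cycle kills each of them. First, $H\asymp N/\mathrm{gap}$ (equivalently $H\asymp N\rel$) fails for transitive graphs: on the cycle $H\asymp n^{2}$ while $N/\mathrm{gap}\asymp n^{3}$; the eigentime identity \eqref{e:RT} only gives $\alpha=\sum_{i\ge2}\lambda_i^{-1}$, which lies anywhere between $\rel$ and $(N-1)\rel$, and transitivity only gives $\alpha\le H\le 2\alpha$ (Fact~\ref{f:a}), not a comparison with $N\rel$. Second, the ``elementary fact'' $t_{\mathrm{cov}}\ge c\,H\log N$ for transitive graphs is also false: on the cycle $t_{\mathrm{cov}}\asymp n^{2}\asymp H$, not $H\log n$. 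Matthews' lower bound does not let you multiply $H$ by $\log N$; it requires exhibiting a \emph{large set} of states whose \emph{pairwise} expected hitting times are all of order $H$, and producing such a set is precisely the nontrivial step.

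The paper supplies exactly this missing ingredient as Proposition~\ref{p:transitivecover}: inequality \eqref{e:M>4} shows that for each $x$ the set of $z$ with $\E_z[T_x]\le(1-\eps)\alpha$ has density at most roughly $\log(2/\eps)/(\log(2/\eps)+\eps\,\alpha\,\mathrm{gap})$ (proved by splitting $\alpha-\E_z[T_x]=\int_0^\infty(H_t(z,x)|V|-1)\,dt$ at time $s\asymp\rel|\log\delta|$, using Markov's inequality on the early part and the Poincar\'e-inequality decay estimate of Lemma~\ref{lem:Ebound} on the tail). A greedy extraction then yields a set $B$ of size $\gtrsim\mathrm{gap}\cdot\alpha\gtrsim\mathrm{gap}\cdot H$ with $\E_a[T_b]\ge\alpha/2$ for all $a,b\in B$ (using $\E_a[T_b]=\E_b[T_a]$ in the transitive case), and Matthews' lower bound over $B$ gives \eqref{e:M>5}: $t_{\mathrm{cov}}\ge\tfrac14 H\log(\mathrm{gap}\,H/14)$. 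Note the logarithm is of $\mathrm{gap}\cdot H$, not of $N$ --- vacuous for the cycle, divergent exactly when you need it --- and this is what closes the equivalence. Your proposal, as written, does not contain this idea and cannot be repaired by citing sharper literature bounds of the form $H\asymp N\rel$, since no such bound holds.
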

We note that in the setup of Theorem \ref{prop:cover}, if $\mathrm{gap}(G_n)t_{\mathrm{cov}}(G_n) =O(1)  $ then  $\tau_{\mathrm{cov}}^{(n)}/t_{\mathrm{cov}}^{(n)} $ does not concentrate around any fixed value (by transitivity this holds for all initial states).\footnote{In fact, it is shown in \cite{basu} that for reversible chains there is always some state $x $ and a set $A$ of stationary probability at least $1/2$ such that $\Pr_x[T_A>t] \ge \exp(-\mathrm{gap} \, t) $ for all $t \ge 0$, where $T_A:=\inf\{t:X_t \in A\} $.} 
 
Theorem \ref{prop:cover} holds in the more general setup of reversible \emph{transitive Markov chains}. That is, reversible Markov chains on a finite state space $V$ whose transition matrix satisfies that for every $x,y \in V$ there is a bijection $f:V \to V$ such that $f(x)=y$ and $P(x,z)=P(y,f(z))$ for all $z \in V$.   Theorem \ref{prop:cover2} extends Theorem \ref{prop:cover}  to a much larger class of Markov chains. Denote the \emph{average hitting time} of an irreducible Markov chain on a finite state space $V$ by \[\alpha:=\sum_{x,y \in V }\pi(x) \pi(y) \E_x[T_y]=\sum_{y \in V } \pi(y) \alpha_y, \qquad  \] where throughout $\pi$ denotes the stationary distribution, and $\alpha_y:=\E_{\pi}[T_y]$. Theorem \ref{prop:cover2} indeed generalizes Theorem \ref{prop:cover}, as
 (by Fact \ref{f:a})  for a transitive chain $\alpha \le \hit \le 2 \alpha $ .  \begin{theorem}
\label{prop:cover2}
Consider a sequence of irreducible reversible Markov chains with finite state spaces $V^{(n)}$ and stationary distributions $\pi^{(n)}$. If\footnote{We write $o(1)$ for terms which vanish as $n \to \infty$. We write $f_n=o(g_n)$ or $f_n \ll g_n$ if $f_n/g_n=o(1)$. We write $f_n=O(g_n)$ and $f_n \lesssim g_n $ (and also $g_n=\Omega(f_n)$ and $g_n \gtrsim f_n$) if there exists a constant $C>0$ such that $|f_n| \le C |g_n|$ for all $n$. We write  $f_n=\Theta(g_n)$ or $f_n \asymp g_n$ if  $f_n=O(g_n)$ and  $g_n=O(f_n)$.} $\alpha^{(n)} \asymp \hit^{(n)} $, we have that $\sfrac{ \tau_{\mathrm{cov}}^{(n)}}{ t_{\mathrm{cov}}^{(n)}} \to 1$ in distribution for every sequence of initial states if and only if $\mathrm{gap}^{(n)}t_{\mathrm{cov}}^{(n)} \to \infty $. Moreover, if  $\min_{x \in V^{(n)} }\alpha_x \asymp t_{\mathrm{cov}}^{(n)} $ then 
\begin{equation}
\label{e:nonconcenntration}
\min_{v \in V^{(n)}  }\max_{x\in V^{(n)}} \inf_{C \ge 1}- \frac{1}{C} \log \Pr_v[T_x>Ct_{\mathrm{cov}}^{(n)}  ] \lesssim 1, 
 \end{equation}
 and   $\tau_{\mathrm{cov}}^{(n)}/t_{\mathrm{cov}}^{(n)} $ does not concentrate around any fixed value for any sequence of initial states.
\end{theorem}
As (\eqref{e:RT}) $\sfrac{1}{\mathrm{gap}} \le \alpha \le \hit \le t_{\mathrm{cov}} $,  by Theorem \ref{prop:cover2}    $\sfrac{1}{\mathrm{gap}^{(n)}} \asymp t_{\mathrm{cov}}^{(n)}$ if and only if $\alpha^{(n)} \asymp t_{\mathrm{cov}}^{(n)} $.
\begin{remark}
The condition  $\min_{x \in V^{(n)} }\alpha_x \asymp t_{\mathrm{cov}}^{(n)} $ above \eqref{e:nonconcenntration} is implied by  $\min_{x \in V^{(n)} }\alpha_x \asymp t_{\mathrm{hit}}^{(n)} $ in conjunction with $\mathrm{gap}^{(n)}t_{\mathrm{cov}}^{(n)} \asymp 1 $ (see \eqref{e:mixlehit}). Of course,  $\min_{x \in V^{(n)} }\alpha_x \asymp t_{\mathrm{hit}}^{(n)} $  is stronger than the condition $\alpha^{(n)} \asymp \hit^{(n)}$ used in the first part of the statement of Theorem \ref{prop:cover2}. We believe that the condition   $\min_{x \in V^{(n)} }\alpha_x \asymp t_{\mathrm{cov}}^{(n)} $ in \eqref{e:nonconcenntration} can be relaxed to   $\alpha^{(n)} \asymp t_{\mathrm{cov}}^{(n)} $.   
\end{remark}
Throughout we work with the continuous-time rate 1 version of the chain. We remark that all our  results are valid also in discrete-time even if the chain is not lazy (i.e., if $P(x,x)=0$ for some $x$). Moreover, in  this case one \emph{does not} need to replace $\mathrm{gap}$ by the absolute spectral-gap (as is often the case when translating a result from the continuous-time or discrete-time lazy setups to the discrete-time non-lazy setup). This can be verified by an application of Wald's equation (used to argue that the expected cover-time and hitting times are the same in both setups), together with the fact that Aldous' result \cite{aldouscover} (which is used in the proofs of Theorems \ref{p:transitivecover} and \ref{p:transitivecover2}) applies to both setups.

 We note that if $G_n$ and  $\widehat G_n $  are two  sequences of finite connected graphs of  uniformly bounded degree which are uniformly quasi isometric (i.e., there exists some $K>0$ such that $G_n$ is $K$-quasi isometric to $\widehat G_n$ for all $n$) then $\mathrm{gap}(G_n) \asymp \mathrm{gap}(\widehat G_n)$, $\alpha(G_n ) \asymp \alpha( \widehat G_n )$,  $\hit(G_n) \asymp \hit( \widehat G_n)$ and \cite[Theorem 1.6]{DLP} $t_{\mathrm{cov}}(G_n) \asymp t_{\mathrm{cov}}(\widehat G_n)$.\footnote{The fact that $\alpha(G_n ) \asymp \alpha( \widehat G_n )   $ follows from \eqref{e:RT} via a standard comparison argument \cite{DS}. The claim that  $\hit(G_n) \asymp \hit( \widehat G_n)$ can be seen from the commute-time identity (e.g.\ \cite[Eq.\ (10.14)]{LPW})   combined with the robustness of the effective-resistance under quasi isometries (cf.\ the proof of Theorem 2.17 in \cite{LP}).} In particular, if $\widehat G_n $ are vertex-transitive, the  sequence of SRWs on $G_n$ satisfies the conditions of Theorem \ref{prop:cover2} (apart perhaps from   $\min_{x \in V^{(n)} }\alpha_x \asymp t_{\mathrm{cov}}(G_n)$, used only in \eqref{e:nonconcenntration}).

The cover time of an $n \times n$ grid torus is concentrated \cite{DPRZ}, while that of the $n$-cycle is not. The following example shows that an $n \times \lceil n/\log^2 n \rceil $ grid torus is in some sense critical.

\begin{example} Consider an $n \times m $ discrete (grid) torus (i.e., the Cayley graph of $\Z_n \times \Z_{m} $ w.r.t.\ the standard choice of generators). If $m=m(n)=O( n/\log^2 n)$ then its (expected) cover time
 is of order $n^2$, same as the inverse of its spectral-gap. Conversely, if $ n/\log^2 n  \ll m\le n$   the  cover time
 is of order $mn (\log n)^{2} \gg n^2 $, while the  spectral-gap is $\Theta(\sfrac{1}{n^2}) $. 
\end{example}



Theorem \ref{prop:cover} is a fairly immediate consequence of the following result (see \S\ref{s:aux} for the details). 
\begin{propo}
\label{p:transitivecover}
For every transitive Markov chain on a finite state space $V$,   
\begin{equation}
\label{e:M>5}
t_{\mathrm{cov}} \ge \sfrac{1}{4} \hit \log \left( \hit \, \mathrm{gap}    /12\right).
\end{equation}
\end{propo}
We note that whenever $\sfrac{1}{ \mathrm{gap}} \lesssim   |V|^{a} $ for some $a \in (0,1)$ the bound offered by \eqref{e:M>5} is of the correct order, as by Matthews \cite{Matthews} $t_{\mathrm{cov}} \le \hit \sum_{i=1}^{|V|-1}\sfrac{1}{i} \le \hit (\log |V|+1)  $ (e.g., \cite[Theorem 11.2]{LPW}) and by \eqref{e:RT}  $\hit \ge \alpha \ge \sfrac{|V|}{2} $ (and so $\mathrm{gap} \, \hit \gtrsim |V|^{1-a}$).   

Theorem \ref{prop:cover2} (apart from \eqref{e:nonconcenntration}) is a fairly immediate consequence of the following  extension  of Proposition \ref{p:transitivecover}.

\begin{propo}
\label{p:transitivecover2}
For every irreducible reversible Markov chain on a finite state space $V$ with a stationary distribution $\pi$ we have that\begin{equation}
\label{e:M>99}
\begin{split}
 t_{\mathrm{cov}} \ge \frac{ \alpha}{4}  \log \left( M \left(\hit/\alpha ,\alpha \, \mathrm{gap}\right)  \right), \quad \text{where} \quad M(a,b)=\sfrac{ 8\log \left( 8a \right)+b }{  32a\log \left( 8a \right)} 
\end{split}
\end{equation}
\end{propo}
When $\alpha \asymp \hit $ and $\hit \, \gg 1/\mathrm{gap}  $ we get that $M(\sfrac{\hit}{\alpha} ,\alpha \, \mathrm{gap}) \gg 1$ and so $t_{\mathrm{cov}} \gg \hit  $.

\subsection{Related work}
Cover times have been studied extensively for over 35 years. This is a  topic with rich ties to other objects such as the Gaussian free field \cite{DLP}. There has been many works providing general bounds on the cover time, studying its evolution  and its fluctuations  in general \cite{Zhai,Ding,MP,Bar,Kahn}, and in particular for the giant component of various random graphs \cite{CF}, for trees \cite{aldousr,DZ},  for the two dimensional torus \cite{DPRZ,Ding2,CP,BK} and for higher dimensional tori \cite{Bel}. Feige \cite{Feige1,Feige2} proved tight extremal upper and lower bounds on cover times of graphs (by SRW). For a more comprehensive review of the literature see the related work section in \cite{DLP} and the references therein.  For further background on  hitting times and cover times see \cite{aldousfill,LPW,aldoushitting}.

\subsection{Organization of this note}
In \S\ref{s:open} we present some open problems. In \S\ref{s:def} we present some background on hitting-times. In \S\ref{s:aux} we  prove Theorems \ref{prop:cover}-\ref{prop:cover2} and  Propositions \ref{p:transitivecover} and \ref{p:transitivecover2}.  Example 1.1 is analyzed in \S\ref{s:EX3}. 

\section{Open Problems}
\label{s:open}
In the following seven questions let $G_n=(V_n,E_n) $ be a sequence of finite connected vertex-transitive graphs of diverging sizes. Denote the degree of $G_n$ by $d_n$ and its diameter (i.e.\ the maximal graph distance between a pair of vertices) by   $\mathrm{Diam}(G_n)$. Let $\rel(G_{n}):=\sfrac{1}{\mathrm{gap}(G_n)}$ be the \emph{relaxation-time}. The following two questions are the focus of an ongoing work  with Nathana\"el Berestycki  (we believe both have an affirmative answer).
 \begin{question}
\label{Q:2}
 Assume that    $\rel(G_n) \ll \hit(G_n) $. Is it the case that $\sfrac{t_{\mathrm{cov}}(G_n)}{\hit(G_n) \log |V_n|} \to 1 $?
\end{question}
 \begin{question}
\label{Q:2'}
 Assume that    $\mathrm{Diam}(G_n)^2 \ll |V_{n}| $ and $d_n \asymp 1$. Does $\sfrac{t_{\mathrm{cov}}(G_n)}{\hit(G_n) \log |V_n|} \to 1 $?
\end{question}
We now discuss some relaxations of the conditions from Questions \ref{Q:2} and \ref{Q:2'}.  
Let $\mathcal{R}(a,b)  $ be the \emph{effective resistance} between $a$ and $b$ and $\mathcal{R}_*:=\max_{x,y } \mathcal{R}(x,y)  $ (e.g.\ \cite[Ch.\ 9]{LPW} and \cite[Ch.\ 2]{LP}). Let  $o_{n} \in V_n$.  Consider the conditions:
\begin{itemize}
\item[(i)] $\mathcal{R}_* (G_{n}  )\asymp 1/d_n $ (where $\mathcal{R}_*(G)$ is $\mathcal{R}_*$ for SRW on $G$),   
\item[(ii)]  $|\{x \in V_n : \mathcal{R}(o_{n},x)  \le \delta\mathcal{R}_*(G_{n}  ) \}|=O(1) $ for all fixed $\delta \in (0,1) $. 
\end{itemize}
 \begin{question}
\label{Q:3} 
Does    $\rel(G_n) \log |V_n| \lesssim \hit(G_n) $ imply (i)? 
\end{question}
 \begin{question}
\label{Q:3'} 
Does    $\rel(G_n) \log |V_n| \ll \hit(G_n) $ imply (ii) when $d_n \asymp 1 $? 
\end{question}
 Condition (i) arises in forthcoming work of Tessera and Tointon \cite{TT} as the analogue of transience in a Varopoulos-type result for finite vertex-transitive graphs. In particular, they show (in the transitive setup) that it follows from the condition $\mathrm{Diam}(G_n)^2 \lesssim \frac{|V_n|}{\log |V_n|}$.  Below we give some equivalent conditions to (i). One of them is $\hit(G_n) \asymp |V_n|$. Using the fact that $\rel(G) \le 2 d \mathrm{Diam}(G)^2  $  for a vertex-transitive graph $G$ of degree $d$ \cite[Theorem 13.26]{LPW} (as well as  $\hit(G_n) \gtrsim |V_n|$), it follows that when $d_n \asymp 1 $,   
\begin{equation}
\label{e:TTimplications}
\begin{split}
& \mathrm{Diam}(G_n)^2 \log |V_n| \lesssim |V_n| \implies \rel(G_n) \log |V_n| \lesssim \hit(G_n), \text{ and likewise} 
\\ & \mathrm{Diam}(G_n)^2 \log |V_n|\ll  |V_n| \implies \rel(G_n) \log |V_n| \ll \hit(G_n) .
\end{split}
\end{equation} 

Moderate growth is a certain technical growth condition introduced by Diaconis and Saloff-Coste in their seminal work \cite{DSC}. For Cayley graphs this condition is shown by Breuillard and Tointon \cite{BT} to be equivalent  to the condition   $c|V| \le \mathrm{Diam}(G)^a $, in some precise quantitative sense, with these $a$ and $c$ being related to the parameters in the definition in \cite{DSC}. This was recently extended to vertex-transitive graphs by Tessera and Tointon \cite{TT2}. 
  
Using the fact that for vertex-transitive graphs of moderate growth  $\rel(G_n) \gtrsim \mathrm{Diam}(G_n)^2  $ \cite{DSC},\footnote{See also \cite[\S8.1]{Ex}, where it is noted that argument from \cite{DSC} is valid for vertex-transitive graphs of moderate growth, not just for Cayley graphs.} it appears that the main ingredient for establishing the converse implications to the ones in \eqref{e:TTimplications} when $d_n \asymp 1$  is providing an affirmative answer to Question \ref{Q:3}. Indeed for graphs of sufficiently large growth  the condition  
  $\mathrm{Diam}(G_n)^2 \ll \frac{|V_n|}{\log |V_n|}$ holds for free.  
 \begin{question}
\label{Q:newer}
Do the reverse implications in \eqref{e:TTimplications} hold? What can be said without the assumption that $d_n \asymp 1 $? 
\end{question}  
   \begin{question}
\label{Q:new}
    Assume that $\mathrm{Diam}(G_n)^2 \ll \frac{|V_n|}{\log |V_n|} $. Is it the case that for all fixed $\delta \in (0,1) $,   $|\{x \in V_n : \mathcal{R}(o_{n},x)  \le \delta\mathcal{R}_*(G_{n}  ) \}|=O(d_{n})$?
\end{question}
For vertex-transitive graphs condition (i) is equivalent to   $\hit(G_n) \asymp |V_n| $  and condition (ii)  is equivalent to the condition that $|\{x \in V_n : \E_{o_n}[T_x] \le \delta \hit(G_{n}) \}|=O(1) $ for all fixed $\delta \in (0,1) $. Indeed, by the commute-time identity (e.g., \cite[Proposition 10.7]{LPW}), for SRW on a  graph $G=(V,E)$ we have that   \begin{equation}
 \label{e:commutetimeid}
 \half (\E_{o}[T_x]+\E_{x}[T_o])=|E|\mathcal{R}(o,x).   
 \end{equation}
If $G=(V,E) $ is also vertex-transitive, then $\E_{o}[T_x]=\E_{x}[T_o]$ \cite[Proposition 2]{aldoushitting} (see also \cite[Proposition 10.10]{LPW}, this also follows from \eqref{e:Z}), and so $\E_{o}[T_x]=|E|\mathcal{R}(o,x) $. 
Hence  
   \[\mathcal{B}_{\mathrm{eff-res}}(o,\delta\mathcal{R}_*(G)):=\{x \in V  : \mathcal{R}(o,x)  \le \delta\mathcal{R}_* (G)\} =\{x : \E_{o}[T_x] \le \delta \hit(G) \}, \]
and (even without transitivity)    $\hit(G_n) \asymp |V_n| $ iff  $\mathcal{R}_* (G_{n}  )\asymp 1/d_{n} $.
\begin{question}
In the above setup, is it the case that   $\hit(G_n) \asymp |V_n| $ if and only if $t_{\mathrm{cov}}(G_{n})\asymp |V_n| \log |V_n|$?
\end{question}
The implication $\hit(G_n) \asymp |V_n| \implies t_{\mathrm{cov}}(G) \asymp  |V_n| \log |V_n|  $ follows (even without transitivity) from \eqref{e:mixlehit} and the bound $t_{\mathrm{cov}}(G)\ge (1-o(1))|V| \log |V| $  \cite{Feige1} holding for every $G=(V,E)$. 

It is not clear what is the correct analog of transience for a sequence of  finite graphs. One technical definition   is given in \cite{MP}. Here we discuss two different conditions. Our goal is to motivate Questions \ref{Q:3}-\ref{Q:newer}, relate them back to $\tau_{\mathrm{cov}}$  and stimulate future research.  

A natural informal  definition of being \emph{uniformly locally transient} for a sequence $G_n=(V_n,E_n)$ of $d_n$-regular graphs is that the expected number of returns to the origin of the walk by the mixing-time (or by time $|V_n|$) is $O(1)$ (uniformly in the choice of the origin). If we think of ``mixing" as ``reaching infinity" then this is a natural analog of transience in the infinite setup. It is hence natural that such a condition can be phrased in terms of effective resistance. To make this precise,  one can consider the following equivalent conditions: \begin{itemize}
\item $\mathcal{R}_*(G_n) \asymp 1/d_n $ (as discussed above, this is equivalent to $\hit \asymp |V_n| $).
\item  $\min_{x,y \in V_n:\, x\neq y} \mathrm{P}_x[T_x^+>T_y] \gtrsim 1 $, where $T_x^{+}:=\inf\{t:X_t=x, X_s \neq x \text{ for some }s<t \}$.
\item $ \max_{x,y \in V_n } \int_{0 }^{\rel(G_n)} H_t^{G_{n}}(x,x)dt = O(1)$ (where $H_t^{G_{n}}(\cdot,\cdot \cdot)$ are the time $t$ transition probabilities for SRW on $G_n$; equivalently, $\max_{x\in V_n } \int_{0 }^{\rel(G_n)} H_t^{G_{n}}(x,y)dt = O(1)$). 
\item $ \max_{x,y \in V_n } \int_{0 }^{|V_n| \vee \mixin(G_n) } H_t^{G_{n}}(x,y)dt  = O(1)$, where $a \vee b:=\max\{a,b\}$ and $ \mixin:=\inf\{t:\max_{a,b }|\frac{H_t(a,b)}{\pi(b)}-1| \le 1/2 \} $ is the $L_{\infty}$  mixing time.  
\end{itemize}

Consider a sequence of finite connected graphs $G_n:=(V_n,E_n)$. Let  $d_{\mathrm{max}}^{(n)}$ and   $d_{\mathrm{min}}^{(n)}$  be the maximal and minimal (respectively) degree of $G_n$. We assume that $d_{\mathrm{max}}^{(n)} \asymp d_{\mathrm{min}}^{(n)}$ so that $\pi_{G_n}$ the stationary distribution of SRW on $G_n$ satisfies $\max_{v \in V_n }\pi_{G_n}(v) \asymp \sfrac{1}{|V_n|}\asymp \min_{v \in V_n }\pi_{G_n}(v)$.  A natural informal definition for saying that $G_n$ is \emph{uniformly globally transient} is that the walk either returns to the origin rapidly (i.e.\ in $O(1)$ time units), or else it is  unlikely to return before getting mixed. To make this precise, we consider the condition \[\text{condition (a):} \qquad \lim_{s \to \infty} \limsup_{n \to \infty} \max_{x,y \in V_n } \int_{s \wedge \rel(G_{n}) }^{\rel(G_n)} H_t^{G_{n}}(x,y)dt=0. \]
In fact, condition (a) is equivalent to (b):  $\limsup_{n \to \infty} \max_{x\in V_n } \int_{s \wedge \rel(G_{n}) }^{\rel(G_n)} H_t^{G_{n}}(x,x) dt \to 0 $, as $s \to \infty $, as well as to (c):   $\limsup_{n \to \infty} \max_{x\in V_n } \int_{s \wedge \mixin(G_{n}) }^{\mixin(G_n)} H_t^{G_{n}}(x,x) dt \to 0 $, as $s \to \infty $ (provided  $d_{\mathrm{max}}^{(n)} \asymp d_{\mathrm{min}}^{(n)}$). These conditions imply that    
\begin{itemize}
\item[(1)]  $\mathcal{R}_* (G_{n}  )\asymp 1/d_{\mathrm{max}}^{(n)} $ (recall that we assume   $d_{\mathrm{max}}^{(n)} \asymp d_{\mathrm{min}}^{(n)}$) , 
\item[(2)] $\max_{o \in V_n} |\{x \in V_n : \E_{o}[T_x] \le \E_{\pi_{G_{n}}}[T_y]- \delta |V_{n}|   \}| =O(1)$ for all fixed $\delta >0 $,
\item[(3)] $\rel(G_n)=o(\hit(G_n))$.
\end{itemize}
  When $G_n$ are vertex-transitive,  condition (1)-(3) are equivalent to condition (a) and also  to conditions  (1),(2') and (3), where condition (2') is:  $|\mathcal{B}_{\mathrm{eff-res}}(o,\delta\mathcal{R}_*(G_{n}))|=O(1)$, for all fixed $\delta \in (0,1)$ (this is  condition (ii) above).  While we omit the proof of these equivalence, we stress that some of them are not at all obvious. 

We strongly believe that (in the transitive setup) these conditions imply that $\sfrac{t_{\mathrm{cov}}(G_n)}{\hit(G_n) \log |V_n|} \to 1  $, and that the distribution of the cover time exhibits Gumbel fluctuations.  

Considering  an $n \times n$ discrete torus shows that even under transitivity the condition   $\rel(G_n) \log |V_n| \asymp \hit(G_n)$ does not imply that    $\mathcal{R}_* (G_{n}  )\asymp 1 $. Another illustrative example is an  $n \times n \times f(n) $ discrete torus. It is not hard to verify that when $f(n)=\lceil \log n \rceil$, we have that     $\rel(G_n) \log |V_n| \asymp \hit(G_n) $ and   $\mathcal{R}_* (G_{n}  )\asymp 1 $.  However for every $\delta \in (0,1)$ it holds that  $|\{x \in V_n : \mathcal{R}(o_{n},x)  \le \delta\mathcal{R}_*(G_{n}  ) \}|=\Omega_{\delta}(n^{ 2 c \delta} \log n )$ for some absolute constant $c \in (0,1) $, and so condition (ii) fails. Conversely, if $\log n \ll f(n) \lesssim n $ then      $\rel(G_n) \log |V_n| \ll \hit(G_n) $  and conditions (i) and (ii) hold.

Considering a cartesian product of the $n$-cycle with a vertex-transitive expander of size $f(n) \gg n $ shows that  
the condition     $\rel(G_n) \log |V_n| \ll \hit(G_n) $ above is not a necessary condition for conditions (i) and (ii) to hold.

In light of Example 1.1, the following  question naturally arises.
 \begin{question}
\label{Q:1.2}

Let $G_n$ be a sequence of finite connected vertex-transitive graphs of diverging sizes and uniformly bounded degrees. Assume that along every subsequence   $\sfrac{ \tau_{\mathrm{cov}}(G_n)}{ t_{\mathrm{cov}}(G_n)} $ does not converge to 1 in distribution.
Is it the case that  when viewing $G_n$ as a metric space with the graph distance as its metric,  after rescaling distances by a $\frac{\mathrm{Diam}(G_n)}{f (\mathrm{Diam}(G_n))}$ factor,
for every $f:\N \to \R_+$ satisfying $1 \ll f(k) = o((\log k)^{2})$,  the pointed  Gromov-Hausdorff  scaling limit exists and is $\R$?
\end{question}
This question is the cover time analog of a question from \cite{BH2}\footnote{In which the assumption on    $\sfrac{ \tau_{\mathrm{cov}}(G_n)}{ t_{\mathrm{cov}}(G_n)} $ is replaced by the assumption that $\mixin(G_n) \asymp \hit(G_n)$, and there we take $f(k) =o(\log k)$).}, where it is shown that for a sequence of finite vertex-transitive graphs $G_n$ of fixed degree and increasing sizes satisfying that their mixing times are proportional to their  maximal hitting times,  $G_n$ rescaled by their diameters converge in the Gromov-Hausdorff topology to  the unit circle $S^1$.

\section{Hitting-times preliminaries}
\label{s:def} 
 Let $(X_t)_{t=0}^{\infty}$ be an irreducible reversible  Markov chain on a finite state space $V$ with transition matrix $P$ and stationary distribution $\pi$. Denote the law of the continuous-time rate 1 version of the chain starting from vertex $x$ (resp.\ initial distribution $\mu$) by $\mathrm{P}_x$ (respectively, $\Pr_{\mu}$). Denote the corresponding expectation by $\mathbb{E}_x$ (respectively, $\E_{\mu}$). Let $H_t:=e^{-t(I-P)}$ be its heat kernel (so that $H_t(\cdot, \cdot)$ are the time $t$ transition probabilities). 
%
%
%

We now present some background on hitting times. The random target identity (e.g.~\cite[Lemma 10.1]{LPW}) asserts that $ \sum_y \pi(y) \mathbb{E}_x[T_y] $ is independent of $x$ and hence equals $\alpha$, while for all $x \in V$ we have that (e.g.~\cite[Proposition 10.26]{LPW}) 
\begin{equation}
\label{e:alphax}
 \alpha_x:=  \E_{\pi}[T_x]=\frac{1}{\pi(x)} \sum_{i=0}^{\infty}\left( P^{i}(x,x)-\pi(x) \right)=\frac{1}{\pi(x)} \int_{0}^{\infty}\left( H_t(x,x)-\pi(x) \right)dt, \end{equation}

Averaging over $x$ yields the eigentime identity (\cite[Proposition 3.13]{aldousfill})  
\begin{equation}
\label{e:RT}
\alpha=\sum_{x,y} \pi(x)\pi(y) \mathbb{E}_x[T_y]=\sum_{y} \sum_{i=0}^{\infty}(P^{i}(y,y)-\pi(y))=\sum_{i=0}^{\infty}\mathrm{[Trace}(P^i)-1]=\sum_{i \ge 2}\frac{1}{\la_i}. \end{equation}
For a transitive Markov chain  $ \mathbb{E}_x[T_y]= \mathbb{E}_y[T_x] $ for all $x,y$ \cite[Proposition 2]{aldoushitting} (see also \eqref{e:ZZZ}) and so \[\forall \, x, \quad \alpha=\E_{\pi}[T_x].\]
Let $U \sim \pi $ be independent of the chain. As $T_x \le T_U+\inf\{t:X_{t+T_U}=x \}$,  using the random target identity to argue $\E[T_{U}]=\alpha $, as well as the strong Markov property yields:
\begin{fact}[\cite{LPW} Lemma 10.2]
\label{f:a}
  $\max_x \alpha_x  \le \hit \le \alpha+\max_x \alpha_x \le  2\max_x \alpha_x $.
\end{fact}
The following material can be found at \cite[\S 3.5]{aldousfill}.  Under reversibility, for any set $A$ the law of its hitting time $T_A:=\inf\{t:X_t \in A\} $ under initial distribution $\pi$ conditioned on $A^{\complement} $, is a mixture of Exponential distributions, whose minimal parameter $\lambda(A)$ is the Dirichlet eigenvalue of the set $A^{\complement}$. There exists a distribution $\mu_A$, known as the \emph{quasi-stationary distribution} of $A^{\complement}$, under which  $T_A$ has an Exponential distribution of parameter $\la(A)$.  It follows that $\la(A) \ge \frac{1}{\max_a \E_a[T_A]} $. We see that for all $t \ge 0$,  \begin{equation}
\label{e:exptailpi} \Pr_{\pi}[T_y> t] \le \exp(-t/\hit) , \quad \text{and so} \quad \E_{\pi}[T_y^2] \le 2 \hit^{2}. 
\end{equation} 
Using the above description of the law of $T_A$ it is not hard to show \cite[p.\ 86]{aldousfill} that
\begin{equation}
\label{e:poscorfortail} \forall \, s,t \ge 0, \qquad \Pr_{\pi}[T_y> t+s \mid T_y \ge s ] \ge \Pr_{\pi}[T_y> t ]. 
\end{equation} 
Let
\[Z_{x,y}:=\int_0^{\infty}\left(H_t(x,y)-\pi(y)\right)dt. \]   
By \eqref{e:alphax} $\pi(y) \alpha_y:=Z_{y,y} $ for all $y$. We also have that for all $x,y \in V$ (e.g.\ \cite[\S2.2]{aldousfill})
\begin{equation}
\label{e:Z}
 \alpha_y -\E_x[T_y]=Z_{x,y}/\pi(y)\, \stackrel{\text{(under reversibility)}}{ =}Z_{y,x}/\pi(x)=\alpha_x- \E_y[T_x].
\end{equation}
Hence by \eqref{e:alphax} (this is justified in more detail below) 
\begin{equation}
\label{e:ZZZ}
\E_x[T_y]=\frac{1}{\pi(y)}\int_0^{\infty}\left(H_t(y,y)-H_{t}(x,y)\right)dt.
\end{equation} 
Indeed, it follows from the spectral decomposition (e.g., \cite[\S12.1]{LPW}) that for all $x$ and  all $s,t \ge 0$ we have that   
 \begin{equation}
 \label{e:specd}
0< H_{t+s}(x,x) -\pi(x) \le e^{-s/\rel}(H_{t}(x,x) -\pi(x)). 
\end{equation} By a standard application of reversibility and the Cauchy-Schwartz inequality (e.g.\  \cite[Eq.\ (3.59)]{aldousfill}) $|\sfrac{H_t(y,x)}{\pi(x)}-1|^{2}=|\sum_z \pi(z)\sfrac{H_{t/2}(y,z)- \pi(z)}{\pi(z)}\sfrac{H_{t/2}(x,z)- \pi(z) }{\pi(z)}|^{2} \le  \left(\sfrac{H_{t}(x,x)}{\pi(x)}-1\right)\left(\sfrac{H_{t}(y,y)}{\pi(y)}-1\right)   $ (where we have used reversibility to get $\sum_z \pi(z)\left( \sfrac{H_{s}(a,z)- \pi(z)}{\pi(z)}\right)^2=\sfrac{H_{2s}(a,a)}{\pi(a)}-1 $). Hence
\begin{equation}
\label{e:diaglarger}
|\sfrac{H_t(y,x)}{\pi(x)}-1| \le \half \left(\sfrac{H_{t}(x,x)}{\pi(x)}-1\right)+\half \left(\sfrac{H_{t}(y,y)}{\pi(y)}-1\right). \end{equation}
Combining \eqref{e:specd} and \eqref{e:diaglarger} we see that
 \[\max_{a,b} \int_{0}^{\infty}\frac{|H_{t}(a,b)-\pi(b) |}{\pi(b)} dt  = \max_a \int_{0}^{\infty}\frac{H_{t}(a,a)-\pi(a) }{\pi(a)} dt \le \max_a \int_0^{\infty}\frac{e^{-t/\rel}}{\pi(a)}dt<\infty .\] Hence $\int_{0}^{\infty}(H_{t}(y,y)-\pi(y))  dt -\int_{0}^{\infty}(H_{t}(x,y)-\pi(y))  dt=\int_{0}^{\infty}\left(H_{t}(y,y)-H_{t}(x,y)\right) dt $, and so \eqref{e:ZZZ} indeed follows from \eqref{e:alphax} and \eqref{e:Z}. Combining \eqref{e:specd} and \eqref{e:diaglarger} also yields: 

\begin{lemma}
\label{lem:auxexpdecay} For every irreducible, reversible Markov chain on a finite state space $V$ with a stationary distribution $\pi$ we have that

\begin{equation}
\label{e:Z1}
\forall \, x,y \in V,s \ge 0, \qquad  \int_s^{\infty}\left(\frac{H_t(y,x)}{\pi(x)}-1\right)dt \le \half e^{-s/\rel}(\alpha_{x}+\alpha_y) .
\end{equation}
\end{lemma}
\emph{Proof.}
Using \eqref{e:diaglarger}, \eqref{e:specd} and \eqref{e:alphax} (in this order) 
\[   \int_s^{\infty} \left(\sfrac{H_t(y,x)}{\pi(x)}-1\right)dt  \le \half  \int_s^{\infty} \left(\sfrac{H_t(x,x)}{\pi(x)}-1\right)dt+\half  \int_s^{\infty} \left(\sfrac{H_t(y,y)}{\pi(y)}-1\right)dt  \]
\[\le \half e^{-s/\rel}  \int_0^{\infty} \left(\sfrac{H_t(x,x)}{\pi(x)}-1\right)dt+\half e^{-s/\rel}  \int_0^{\infty} \left(\sfrac{H_t(y,y)}{\pi(y)}-1\right)dt= \half e^{-s/\rel}(\alpha_{x}+\alpha_y). \, \qed \]
Recall that   $ \mix^{\mathrm{TV}}:=\inf\{t:\max_a \sum_b|H_t(a,b)-\pi(b) | \le 1/2 \}$ and  $ \mixin:=\inf\{t:\max_{a,b} |\frac{H_t(a,b)}{\pi(b)}-1 | \le 1/2 \} $ are the \emph{total-variation} and the $L_{\infty}$ \emph{mixing times}.
Under reversibility the  quantities considered above satisfy the following  
  hierarchy (e.g.\ \cite[Theorems 10.22 and  12.5]{LPW}): 
\begin{equation}
\label{e:mixlehit}
\sfrac{1}{9\la_2} \log 4 \le \sfrac{1}{9}\mix^{\TV}   \le \sfrac{1}{9} \mixin \le \hit  \le t_{\mathrm{cov}} \le \hit (\log |V| +1 ), 
\end{equation}
where the last inequality is due to Matthews' \cite{Matthews} (see \cite[Ch.\ 11]{LPW} for a neat presentation). It is interesting to note that for reversible chains  $ \mix^{\TV} \le C \min_x \max_y \E_y[T_x]$, for some absolute constant $C$. This follows from the results of Lov{\'a}sz
and Winkler \cite{LW} concerning what they call the ``forget-time".\footnote{Under reversibility, it follows from their result that $t_{\mathrm{stop}} \le C_1 t_{\mathrm{forget-time}} \le C_{1} \min_x \max_y \E_y[T_x] $, while Aldous \cite{aldous1982some} showed that $ \mix^{\TV} \le C_2 t_{\mathrm{stop}}$ (see also Peres and Sousi \cite{PS}), for some absolute constants $C_{1},C_2$, where $t_{\mathrm{stop}} $ is the expectation of a mean optimal stopping rule (\eqref{eq:deftstop}) starting from the worst initial state.} See equation (3.7) in \cite{HLP} for a more elementary derivation.

\section{Proof of Theorems    \ref{prop:cover} - \ref{prop:cover2} and Propositions \ref{p:transitivecover} and \ref{p:transitivecover2}}
\label{s:aux}
We will show that for every irreducible reversible Markov chain on a finite state space $V$,
\begin{equation}
\label{e:M>88}
\forall \, x \in V,\eps \in (0,1), \qquad \pi (B(x,\eps)) \le \sfrac{2  \log \left( \sfrac{2\hit}{ \eps \alpha } \right)}{2  \log \left( \sfrac{2\hit}{ \eps \alpha} \right)+\eps  \alpha \, \mathrm{gap}  },  
\end{equation}
where $B(x,\eps):=\{ z \in V : \mathbb{E}_z[T_x] \le \alpha_x - \eps \alpha \text{ or } \mathbb{E}_x[T_z] \le \alpha_z - \eps \alpha \} $, whereas if it is transitive,
 \begin{equation}
\label{e:M>4}
\forall \, x \in V, \eps \in (0,1), \qquad | \{ z:\mathbb{E}_z[T_x] \le (1-\eps)\alpha\}|\le \sfrac{2  \log (2/\eps) }{  2\log (2/\eps)+\eps\alpha \, \mathrm{gap}  } |V| .
\end{equation} 
We first prove \eqref{e:M>5} and \eqref{e:M>99} assuming \eqref{e:M>4} and \eqref{e:M>88}, whose proofs are deferred to the end of the section.

\emph{Proof of \eqref{e:M>5} and \eqref{e:M>99}.} We first prove \eqref{e:M>5}. By \eqref{e:RT}  $|V| \ge \alpha \, \mathrm{gap} $ (this is used in the first inequality below). By Fact \ref{f:a} $\alpha \ge \half \hit$. Using the fact that for transitive chains $\mathbb{E}_a[T_b]=\mathbb{E}_b[T_a]$ \cite{aldoushitting},  by \eqref{e:M>4} with $\eps=\half$ there exists a set $B\subseteq V$ of size at least \[\left\lceil |V|/\left(|V|\sfrac{4  \log 4 }{  4\log 4+\alpha \, \mathrm{gap}  } \right) \right\rceil \ge \frac{\alpha \, \mathrm{gap}}{4  \log 4} \ge \frac{\hit \, \mathrm{gap}}{8 \log 4 } \ge \frac{\hit \, \mathrm{gap}}{12},   \] such that for all $a,b \in B$ we have that $\mathbb{E}_a[T_b] \ge \alpha/2 \ge \hit/4 $. The claim now follows from Matthews' method \cite{Matthews}  (see \cite[Proposition 11.4]{LPW}), which asserts that $t_{\mathrm{cov}} \ge$ $ \min_{a,b \in B:\, a \neq b  }\mathbb{E}_a[T_b]\sum_{i=1}^{|B|-1}\sfrac{1}{i}$ (using $\sum_{i=1}^{k-1}\sfrac{1}{i} \ge \log k $).

We now prove \eqref{e:M>99}. We first use the Paley-Zygmund inequality to argue that
\[\pi(D) \ge \frac{\alpha^2}{4 \sum_z \pi(z) \alpha_z^2 } \ge \frac{\alpha}{4 \hit }, \qquad \text{where} \qquad D:=\{z:\alpha_z \ge \alpha/2 \}. \]
Hence by \eqref{e:M>88} with $\epsilon=\sfrac{1}{4}$  there exists a subset $B$ of $D$ of stationary probability at least  \[ \left\lceil \pi(D)/\left(  \sfrac{  8\log \left( \sfrac{8\hit}{  \alpha} \right)}{ 8\log \left( \sfrac{8\hit}{  \alpha} \right)+\mathrm{gap} \cdot \alpha}  \right) \right\rceil ,  \] such that for all $a,b \in B$ we have that $\mathbb{E}_a[T_b] \ge \alpha_b- \alpha/4 \ge \alpha/4 $ (using the fact that $\alpha_x \ge \alpha/2$ for all $x \in D$). The proof is concluded as above using Matthews' method. \qed

\medskip

 We now prove Theorems \ref{prop:cover} and \ref{prop:cover2}.

 \textbf{Proof of Theorems \ref{prop:cover} and \ref{prop:cover2} without \eqref{e:nonconcenntration}.}
Recall that Aldous \cite{aldouscover} showed that in the reversible setup  $\sfrac{ \tau_{\mathrm{cov}}^{(n)}}{ t_{\mathrm{cov}}^{(n)}} \to 1 $ in distribution for all sequences of initial states iff  $\hit^{(n)} \ll t_{\mathrm{cov}}^{(n)} $. By \eqref{e:M>5} in the transitive setup, and by  \eqref{e:M>99} in the setup of Theorem \ref{prop:cover2},
  this occurs iff $\mathrm{gap}^{(n)}t_{\mathrm{cov}}^{(n)} \to \infty $. Indeed, the condition  $\mathrm{gap}^{(n)}t_{\mathrm{cov}}^{(n)} \to \infty $ is necessary for $\hit^{(n)} \ll t_{\mathrm{cov}}^{(n)} $ by \eqref{e:mixlehit}. Conversely, if   $\mathrm{gap}^{(n)}t_{\mathrm{cov}}^{(n)} \to \infty $ and along a subsequence $\hit^{(n)} \asymp t_{\mathrm{cov}}^{(n)} $, then along this subsequence $\mathrm{gap}^{(n)}\hit^{(n)} \to \infty $, which by \eqref{e:M>5} in the transitive setup and by \eqref{e:M>99} in the setup of Theorem \ref{prop:cover2}  implies that  $\hit^{(n)}\ll t_{\mathrm{cov}}^{(n)} $ along this subsequence. A contradiction! \qed

\subsection{Proof of \eqref{e:nonconcenntration} and stopping rules}
\label{s:tstop}

Before proving \eqref{e:nonconcenntration}, we first recall a notion of mixing,  first introduced by Aldous \cite{aldous1982some}
in the continuous-time setup, and later studied in discrete-time by Lov{\'a}sz
and Winkler \cite{LW}  who developed a rich theory  and also by Peres and Sousi \cite{PS} and by Oliveira \cite{Olivehit}:
\[t_{\mathrm{stop}}:=\max_x t_{\mathrm{stop},x}, \qquad \text{where} \] 
\begin{equation}
\label{eq:deftstop}
  t_{\mathrm{stop},x} := \inf \{\mathbb{E}_{x}[T]: T \text{
is a stopping rule
such that }\Pr_{x}[X_{T} \in \cdot]= \pi(\cdot)  \},
\end{equation}
and where a \emph{stopping rule} is a stopping
time, possibly with respect to a filtration larger than the natural filtration. A stopping rule attaining the infimum in \eqref{eq:deftstop}  is called \emph{mean optimal}.  Lov{\'a}sz
and Winkler  \cite{LW}  showed that for every initial state $x$ there  exists a mean optimal stopping rule $T$ and  \cite[Theorem 2.2]{LW}  that $T$ is mean optimal  iff there exists a state $y$ such that a.s.
\[T \le T_y. \]
Such a state is called a \emph{halting state}.
While they work in discrete-time, a standard application of Wald's equation can be used to translate their results to the continuous-time setup (cf.\ \cite{PS}; alternatively, one can simply check that the arguments in \cite{LW} can be carried out  directly in continuous-time). 

 Aldous \cite{aldous1982some} showed that under reversibility $\frac{1}{C} \le t_{\mathrm{stop}} / \mix^{\mathrm{TV}} \le C $ for some universal constant $C$.  This was refined by Peres and Sousi \cite{PS} and independently by Oliveira \cite{Olivehit} (see also \cite[Ch.\ 24]{LPW}) who in particular showed that for reversible Markov chains $\mix \asymp \max_{x,A:\, \pi(A) \ge \alpha}\E_x[T_A]$ for all fixed $\alpha> 1/2$ (this was extended also to $\alpha=1/2 $ in \cite{1/2}). For more connections between hitting times and mixing times we refer the reader to \cite{basu,hermontech,L2}.

\medskip

\textbf{Proof of \eqref{e:nonconcenntration}.} We suppress the dependence on $n$. Fix some $x$ and some mean optimal stopping rule $T$ (such that $\Pr_{x}[X_{T} \in \cdot]= \pi(\cdot)$). Let $y$ be  an halting state. Then by the strong Markov property, and the fact that $T \le T_y$ a.s., we have that for all $t$,
\[\Pr_x[T_y \ge t] \ge \Pr_x[T_y-T \ge t]=\Pr_{\pi}[T_y \ge t].  \] 
By \eqref{e:poscorfortail} 
\[\Pr_{\pi}[T_y \ge  i \alpha_y ] \ge (\Pr_{\pi}[T_y \ge   \alpha_y /2])^{2i} \ge \left( \frac{\alpha_y^2}{4 \mathbb{E}_{\pi}[T_y^2] } \right)^{2i} \] where we have used the Paley-Zygmund inequality in the second inequality. Using the assumption   $\min_z \alpha_z \asymp t_{\mathrm{cov}}$, we conclude the proof by arguing that $\alpha_y^2 \asymp  \mathbb{E}_{\pi}[T_y^2] $.  Indeed, by the aforementioned assumption $\alpha_y \asymp \hit  $, whereas  $\E_{\pi}[T_y^{2}] \le 2 \hit^2 $ by \eqref{e:exptailpi}.           
\qed

\subsection{Proof of \eqref{e:M>4} and \eqref{e:M>88}.}
\label{s:3}
To conclude the proof of Propositions \ref{p:transitivecover} and \ref{p:transitivecover2} it remains to prove \eqref{e:M>4} and \eqref{e:M>88}. 

\textbf{Proof of \eqref{e:M>4}.} Let $\eps >0 $. Let $\gd:=\eps/2$. Consider $s(\delta):=\sfrac{|\log \gd|}{\la_2} $. Let $x \in V$ and
\[A=A(x,\gd):= \{z:\int_0^{s(\delta)} H_{t}(z,x)dt \ge \sfrac{ \delta \alpha+s(\delta)}{|V|}   \}. \]
 By Markov's inequality, we have that 
  \begin{equation*}
 \label{e:piofA'}
\pi(A) \le \sfrac{\sum_{z \in V} \pi(z) \int_0^{s(\delta)} H_{t}(z,x)dt}{(\delta \alpha+s(\delta))/|V|} = \sfrac{s(\delta)}{s(\delta)+\delta \alpha}= \sfrac{|\log \delta |}{ |\log \delta |+\delta \la_2 \alpha}.
 \end{equation*}
By \eqref{e:Z} and \eqref{e:Z1} as well as by the definition of $A$, for all $z \notin A$ we have that
\[\alpha- \E_z[T_x]= \int_0^{s(\delta)}\left( H_{t}(z,x)|V|-1 \right)dt+\int_{s(\delta)}^{\infty}\left( H_{t}(z,x)|V|-1 \right)dt  \le \delta \alpha+\delta \alpha =\eps\alpha. \, \qed \]

\textbf{Proof of \eqref{e:M>88}.} Let $x \in V$. Let $\eps >0 $. Let $\gd=\gd(\eps):= \sfrac{ \eps \alpha }{2 \hit}$. Consider $s(\delta):=\sfrac{|\log \gd|}{\la_2} $ and
\[A=A(x,\gd):= \{z:\int_0^{s(\delta)} H_{t}(z,x)dt \ge \pi(x)(\sfrac{\eps}{2} \alpha+s(\delta))   \}. \]
 By Markov's inequality, we have that 
  \begin{equation*}
 \label{e:piofA''}
\pi(A) \le \sfrac{\sum_{z \in V} \pi(z) \int_0^{s(\delta)} H_{t}(z,x)dt}{(\sfrac{\eps}{2} \alpha+s(\delta))\pi(x) } = \sfrac{s(\delta)}{s(\delta)+\sfrac{\eps}{2}\alpha}= \sfrac{2|\log \delta |}{ 2|\log \delta |+ \eps \la_2 \alpha}. 
\end{equation*}
By \eqref{e:Z} and \eqref{e:Z1} as well as by the definitions of $A$ and $\delta$, for all $z \notin A$ we have that
\[ \alpha_{x}-\E_z[T_x]= \int_0^{s(\delta)}\left(\sfrac{ H_{t}(z,x)}{\pi(x)}-1 \right)dt+\int_{s(\delta)}^{\infty}\left( \sfrac{ H_{t}(z,x)}{\pi(x)}-1 \right)dt  \le \sfrac{\eps}{2} \alpha+ \half \delta (\alpha_{x}+\alpha_z) \le \eps \alpha.  \]
By \eqref{e:Z} we have that $\alpha_z-\E_x[T_z]=\alpha_{x}-\E_z[T_x] \le \eps \alpha $.   \qed

\section{Analysis of Example 1.1}
\label{s:EX3}
Let $G_n$ be an $n \times m $ grid torus with $m=m(n) \le n$.
It is well-known that for SRW on the $n$-cycle the spectral gap is $\asymp n^{-2} $ (cf.\ \cite[Example 12.10]{LPW}). Hence by general results about product chains (e.g.\ \cite[Corollary 12.13]{LPW})   $\mathrm{gap}(G_n) \asymp n^{-2} $ (uniformly for all $m(n) \le n$). 

We first consider the case that $m \in [ n / \log n,n] $. We now prove that $t_{\mathrm{cov}} \lesssim mn(\log n)^2 $  for such $m$. The same bound for $m \in[ \sfrac{n} {(\log n)^2}, \sfrac{n} {\log n}]  $ will be given at the end of the section. Let $H_t:=e^{-t(I-P)}$ be the heat kernel of the continuous-time SRW on $G_{n}$. Let $H_{t}^{(1,k)}$ be the heat kernel of  the continuous-time SRW on the $k$-cycle. Then  $\max_{a,b} H_t^{(1,k)}(a,b) \le \sfrac{C_0}{\sqrt{t+1}} $ for $t \le k^2$ (this follows by the local CLT, e.g.\ \cite[\S4.4]{frog}, or from some more general considerations, e.g.\ \cite[Theorem 17.17]{LPW}), while by the Poincar\' e inequality \[ \max_{a,b} |H_{t+k^2}^{(1,k)}(a,b)/k-1| \le \max_{a} |H_{k^2}^{(1,k)}(a,a)/k-1|e^{-ct/k^2}  \le C_{1}e^{-ct/k^2}\] (for some absolute constants $c,C_0,C_1$) for all $t \ge 0$.     Because the continuous-time chain evolves independently in the two coordinates, at rate $\half$ along each coordinate (e.g.\ \cite[p.\ 288 (20.35)]{LPW}) for all $a=(a_1,a_2)$ and $b=(b_1,b_2) $ 
\begin{equation}
\label{e:factor}
H_t(a,b)=H_{t/2}^{(1,n)}(a_{1},b_{1})H_{t/2}^{(1,m)}(a_{2},b_{2}).
\end{equation} Denote the vertex set of $G_n$ by $V_n$ and the uniform distribution on $V_n$ by $\pi$.  It follows that for all $y \in V_n $ we have that $  \int_{2n^{2}}^{\infty}\left(H_{t}(y,y)-\pi(y)\right) dt \le C_2  \int_{0}^{2n^{2}}\left(H_{t}(y,y)-\pi(y)\right) dt    $ and so
 \begin{equation}
 \label{e:Htyy0inf} 
 \begin{split}
  \int_{0}^{\infty}\left(H_{t}(y,y)-\pi(y)\right) dt & \le C_{2}  \int_{0}^{2m^2}H_{t}(y,y) dt +C_2 \int_{2m^{2}}^{2n^{2}}H_{t}(y,y) dt  
\\ & \le C_3 \int_{0}^{2m^2} \frac{dt}{1+t}  +C_4\int_{2m^2}^{2n^2} \frac{dt}{m \sqrt{ t/2}}\le C_5(\sfrac{n}{m}+ \log m).  
\end{split}
\end{equation}
Hence by Fact \ref{f:a} and \eqref{e:alphax}  \[\hit(G_n) \le 2 \alpha(G_n) \le 2C_5(n^2+nm  \log m) ,\] and so by \eqref{e:mixlehit} \[t_{\mathrm{cov}}(G_n) \le C_6 (n^2+nm  \log m) \log n .\] For $m \in [ n/ \log n,n] $ we get that $t_{\mathrm{cov}}(G_n) \lesssim nm  (\log n)^2 $.

 We now prove a matching lower bound for $m \in [ n/ (\log n)^{2},n] $. 
If $x,y \in V_n $ are of graph distance at least $\sqrt{m} $ then by the local CLT (e.g.\ \cite[\S4.4]{frog})  
\[\forall \, t \ge 0, \qquad H_{t}(y,y)-H_{t}(x,y) \ge c_1 (t+1)^{-1}\Ind{t \le 2m },\]
where we have used the fact that for transitive chains $H_{t}(y,y) \ge H_{t}(x,y) $ for all $x,y$ and all $t$ (e.g.\ \cite[Eq.\ (3.60)]{aldousfill}). Hence by \eqref{e:ZZZ} for such $x,y$ we have that  
 \begin{equation}
 \label{e:Htyy0inf'} 
 \begin{split}
 \E_y[T_x] =\E_x[T_y] =mn \int_{0}^{\infty}\left(H_{t}(y,y)-H_{t}(x,y)\right) dt  \ge  
  c_2 mn \log m,  
\end{split}
\end{equation}

By considering a collection of vertices $A \subset V_n $ of size $\Omega( n ) $ such that for any distinct $a,b \in A$ we have that the distance of $a$ from $b$ is at least $\sqrt{m}$ we get by \eqref{e:Htyy0inf'} and Matthews' argument that  $t_{\mathrm{cov}}(G_n) \gtrsim nm  (\log n)^2 $ (for $m \in [ n/ (\log n)^{2},n] $).  

We now treat  $m \in[1, n/ (\log n)^2]  $. Using our upper bound on the spectral-gap, it remains only need to show that  $t_{\mathrm{cov}}(G_n) \lesssim n^{2}$ in this regime. This requires a more careful analysis than the one above. 

It is not hard to show that there exists an absolute constant $p \in (0,1) $ such that for every $n$ and every $C \ge 1$,  SRW on the $n$-cycle satisfies that  all vertices are visited at least $Cn /4$ times by time $Cn^2$ with probability at least $p$. For an argument which is specific for the cycle cf.\ \cite[Lemma 6.6]{BH}. This also follows from a general result from \cite{DLP} which says that the blanket-time (see \cite{DLP} for a definition) is proportional to the cover time. Thus with probability at least $p$, by time $8C n^2$ for all $i \in [n]$ the walk spends at least $Cn$ steps at each strip $S_{i}:=\{i\} \times [m] $,  where $[k]:=\{1,\ldots,k \}$. We now exploit this fact to obtain the bound   $t_{\mathrm{cov}}(G_n) \lesssim n^{2}$.

For a set $A$ we define the \emph{induced chain} on $A$, denoted by $(Y_k^A)_{k=0}^{\infty}$, to be the chain $(X_t)_{t \ge 0}$ viewed only at times at which it visits $A$. That is $t_0(A):=\inf\{t \ge 0 : X_t \in A \} $, $Y_{0}^{A}:=X_{t_0(A)}$ and inductively,  \[t_{k+1}(A):=\inf\{t > t_k (A): X_t \in A, X_s \neq X_{t_k (A)} \text{ for some }s \in (t_k (A),t] \} \] and $Y_{k+1}^A:=X_{t_{k+1}(A)}$. Observe that if   $\tau_{\mathrm{cov}}^{A}:=\inf\{t:\{X_s :s \le t \} \supseteq A  \} $  and  $\tau_{\mathrm{cov}}^{\mathrm{induced},A}:=\inf\{k:\{Y_i^{A} :i \le k \} = A  \} $ are the cover times of $A$ w.r.t.\ the original chain and the induced chain on $A$, respectively, and $\Pr^{\mathrm{induced},A}$ and $\E^{\mathrm{induced},A}$ are the law and expectation of the induced chain on $A$, then for any partition $A_1,\ldots,A_\ell$ of $V$ we have that for all $v \in V$
\[\forall \, t \ge 0, k \in \N, \quad \Pr_v[\tau_{\mathrm{cov}}> t ] \le \Pr_v[\max_{i \in[\ell] } t_{k}(A_{i})> t ]+\sum_{j \in[\ell]} \max_{a \in A_{j} }\Pr_a^{\mathrm{induced},A_{j}}[\tau_{\mathrm{cov}}^{\mathrm{induced},A_{j}}>k], \]
where we have used the fact that for all $j \in [\ell]$ and $k \in \N$ we have that \[\Pr_v[\tau_{\mathrm{cov}}^{A_{j}}>t,\max_{i  \in[\ell] } t_{k}(A_{i})\le t ] \le \Pr_v[ \{X_{t_{i}(A_{j})}^{A_{j}} :i \le k \} \neq A_j ]= \max_{a \in A_{j} }\Pr_a^{\mathrm{induced},A_{j}}[\tau_{\mathrm{cov}}^{\mathrm{induced},A_{j}}>k]. \]
We  now argue that it suffices to show  that the induced chain  on each  strip $S_i$ satisfies that the probability that it is not covered in  $Cn$ steps is $\ll 1/n$, provided that  $C$ is sufficiently large (note that here steps are counted w.r.t.\ the induced chain, i.e.\ these are the number of visits to each strip). That is, by symmetry, it suffices to show that $\max_{a \in S_{1} }\Pr_a^{\mathrm{induced},S_{1}}[\tau_{\mathrm{cov}}^{\mathrm{induced},S_{1}}>\lceil Cn \rceil]=o(1/n) $. Indeed, once this is established for some $C \ge 1 $ then using the above with the partition $S_1,\ldots,S_n$ we get that 
\[\Pr_v[\tau_{\mathrm{cov}}> 8Cn^{2} ] \le \Pr_v[\max_{i \in [n] } t_{\lceil Cn \rceil }(S_{i})> 8Cn^{2}  ]+n \max_{a \in S_{1} }\Pr_a^{\mathrm{induced},S_{1}}[\tau_{\mathrm{cov}}^{\mathrm{induced},S_{1}}>\lceil Cn \rceil].    \] 
The r.h.s.\ is at most $1-p+o(1) $ where $p \in (0,1) $ is as above. Using the obvious submultiplicativity property\[\forall \, s \ge 0, r \in \N, \qquad \max_v \Pr_v[\tau_{\mathrm{cov}}> sr] \le (\max_v \Pr_v[\tau_{\mathrm{cov}}> s])^{r},   \]
this implies that $t_{\mathrm{cov}}(G_n) \lesssim n^2  $ as desired.

Denote
$M:=\max_{x,y \in S_1 } \mathbb{E}_x^{\mathrm{induced},S_1}[T_y]$. We will show that $M \le C_{6} m \log m$. By Markov's inequality $\Pr_a^{\mathrm{induced}}[T_b>2M] \le \half $ for all $a,b$ in the same strip.  By the Markov property, for all $a,b$ in the same strip, we have that
\[\Pr_a^{\mathrm{induced}}[T_b>(2M) \lceil \log_2(n^3)\rceil ] \le 2^{-\log_2(n^3)  } = n^{-3}. \]
By a union bound over all $m$ vertices in that strip we obtain the desired tail estimate on the cover time of a single strip w.r.t.\ the induced chain, as $(2M) \lceil \log_2(n^3)\rceil \lesssim   m \log m \log n \lesssim  n $. 

It remains to show that $M=O(m \log m) $.
  The induced chain is itself a transitive chain.  In particular, its stationary distribution is the uniform distribution.  Let $x,y \in S_1$.  By Wald's equation, and the fact that the expected return time to $S_1$ from any $a \in S_1$ is $n$, we have that $\E_x[T_y]=n \mathbb{E}_x^{\mathrm{induced},S_1}[T_y] $, and so by \eqref{e:ZZZ} we have that  
 \begin{equation}
 \label{e:eind}
 \mathbb{E}_x^{\mathrm{induced},S_1}[T_y]=m \int_{0}^{\infty}[H_{t}(y,y)-H_{t}(x,y)]dt . \end{equation}

Finally, for $x=(1,x_{2}),y=(1,y_{2}) \in S_1$ by the local CLT (e.g.\ \cite[\S4.4]{frog}) and \eqref{e:factor}
\begin{equation}
\begin{split}
\label{e:6.5}
&H_{t}(y,y)-H_{t}(x,y)\leq H_{t/2}^{(1,n)}(1,1) \max_{a,b} \left( H_{t/2}^{(1,m)}(1,a)-H_{t/2}^{(1,m)}(1,b)\right)  \\ & \lesssim (t+1)^{-1}\Ind{t \le 2m^{2} }+\frac{1}{m \sqrt{t}} e^{-ct/m^{2}}\Ind{2m^{2} < t \le 2n^2 }+\frac{1}{m n} e^{-ct/m^{2}}\Ind{  t > 2n^2 },
\end{split}
\end{equation}
where we have used the Poincar\'e inequality for SRW on the $m$-cycle. Integrating we see that for $m \in [n/\log n,n] $ indeed 
\begin{equation}
\label{e:6.3} \max_{x,y \in S_1 } \int_{0}^{\infty}[H_{t}(y,y)-H_{t}(x,y)]dt \lesssim \log m   .
\end{equation}
 By \eqref{e:eind} we get that indeed $M \lesssim m \log m  $, as desired. 

We now show that also for $m \in [n / (\log n)^2,n/\log n] $ we have $t_{\mathrm{cov}} \lesssim nm (\log n)^2 $. Indeed by \eqref{e:6.3} $M \lesssim m \log m  $ and so by  the above analysis we can bound $t_{\mathrm{cov}}$ from above, up to a constant factor, by the expected time until all strips are visited for at least $CM \log n $ time units. As $M \log n \gtrsim n $ it is not hard to to show that this time is of order $n \times (CM \log n) \asymp nm (\log n)^2$ by using the fact that there exists $C>0$ such that with probability at least $p>0$ all strips are visited for at least $n$ time units by time $Cn^{2}$. To see this, consider a sequence of consecutive time intervals of length $Cn^{2}$, and use the Markov property to argue that during each interval, with probability at least $p>0$, independently of the previous time intervals, all strips are visited for at least $n$ time units). \qed 

\appendix

{\bf Acknowledgements:} The author would like to thank  Perla Sousi for carefully reading a previous draft of this work and finding several typos. This  paper  greatly  benefited  from  discussions  with  Ofer Zeitouni  about the cover time of an $n \times m$ grid torus and with Matthew Tointon about the Open problems from \S\ref{s:open}. We also thank the anonymous referee for thoroughly reading the manuscript and for finding a mistake in the analysis of Example 1.1 in a previous draft, as well as multiple typos.


\begin{thebibliography}{BKC}


\bibitem{aldoushitting}
Aldous, D., Hitting times for random walks on vertex-transitive graphs. \emph{Math. Proc. Cambridge Philos. Soc.}  \textbf{106} (1989), no. 1, 179--191. \href{http://www.ams.org/mathscinet-getitem?mr=MR0994089}{\textcolor{blue}{MR0994089}}.

\bibitem{aldousr}
Aldous, D.,
\newblock Random walk covering of some special trees.
\newblock \emph{J. Math. Anal. Appl.} 157 (1991), no. 1, 271--283. \href{http://www.ams.org/mathscinet-getitem?mr=MR1109456}{\textcolor{blue}{MR1109456}}

\bibitem{aldous1982some}
Aldous, D.,
\newblock Some inequalities for reversible Markov chains.
\newblock {\em Journal of the London Mathematical Society}, 2(3):564--576,
  1982. \href{http://www.ams.org/mathscinet-getitem?mr=MR657512}{\textcolor{blue}{MR657512}}

\bibitem{aldouscover}
Aldous, D., Threshold limits for cover times. \emph{J. Theoret. Probab.} \textbf{4} (1991), no. 1, 197--211. \href{http://www.ams.org/mathscinet-getitem?mr=MR1088401}{\textcolor{blue}{MR1088401}}.



\bibitem{aldousfill}
Aldous, D., and Fill, J.,
  \emph{Reversible Markov chains and random walks on graphs}. Unfinished manuscript. Available at the first author's \href{https://www.stat.berkeley.edu/~aldous/}{\textcolor{blue}{website}}.





\bibitem{Bar}
Barlow, M., T., Ding, J., Nachmias, A., and Peres, Y.,
\newblock The evolution of the cover time.
\newblock \emph{Combin. Probab. Comput.} 20 (2011), no. 3, 331--345. \href{http://www.ams.org/mathscinet-getitem?mr=MR2784631}{\textcolor{blue}{MR2784631}}

\bibitem{basu}
Basu, R., Hermon, J., Peres, Y., Characterization of cutoff for reversible Markov chains. \emph{Ann. Probab.} 45 (2017), no. 3, 1448--1487. \href{http://www.ams.org/mathscinet-getitem?mr=MR3650406}{\textcolor{blue}{MR3650406}}


\bibitem{Bel}
Belius, D.,
\newblock Gumbel fluctuations for cover times in the discrete torus. 
\newblock \emph{Probab. Theory Related Fields} 157 (2013), no. 3-4, 635--689. \href{http://www.ams.org/mathscinet-getitem?mr=MR3129800}{\textcolor{blue}{MR3129800}}

\bibitem{BK}
Belius, D., and Kistler, N., 
\newblock The subleading order of two dimensional cover times. 
\newblock \emph{Probab. Theory Related Fields} 167 (2017), no. 1-2, 461--552. \href{http://www.ams.org/mathscinet-getitem?mr=MR3602852}{\textcolor{blue}{MR3602852}} 

\bibitem{BH} Benjamini, I., and Hermon, J., 
\newblock Recurrence of Markov chain traces. To appear in \emph{Annales Henri Poincar\'e prob. and statistics.}
\newblock (2019) Arxiv preprint \href{https://arxiv.org/abs/1711.03479}{\textcolor{blue}{arXiv:1711.03479}}

\bibitem{frog}
Benjamini, I.,  Fontes, L.\ R.,  Hermon, J., and Machado, F.\ P.,
\newblock On an epidemic model on finite graphs. To appear in \emph{Ann. Appl. Probab.}
\newblock (2019) Arxiv preprint \href{https://arxiv.org/abs/1610.04301}{\textcolor{blue}{arXiv:1610.04301}}


\bibitem{BH2} Benjamini, I., Hermon, J., Tessera, R., and Tointon, M.,  
\newblock Transitive graphs with proportional mixing and hitting times scale to the unit-circle. In preparation (2019+) 

\bibitem{BT}
Breuillard, E., and Tointon, M.\ C.\ H., Nilprogressions and groups with moderate growth. \emph{Adv. Math.} 289 (2016), 1008--1055. \href{http://www.ams.org/mathscinet-getitem?mr=MR3439705}{\textcolor{blue}{MR3439705}}





\bibitem{Chen}
Chen, G.\ Y., and Saloff-Coste, L., The cutoff phenomenon for ergodic Markov processes.
 \newblock \emph{Electron. J. Probab.} 13 (2008), no. 3, 26--78.  
\href{http://www.ams.org/mathscinet-getitem?mr=MR2375599}{\textcolor{blue}{MR2375599}}
\bibitem{CP}
Comets, F., Gallesco, C., Popov, S, and Vachkovskaia, M.,
\newblock On large deviations for the cover time of two-dimensional torus. \newblock \emph{Electron. J. Probab.} 18 (2013), no. 96, 18 pp. 
\href{http://www.ams.org/mathscinet-getitem?mr=MR3126579}{\textcolor{blue}{MR3126579}}

\bibitem{CF}
Cooper, C., and Frieze, A., 
\newblock The cover time of the giant component of a random graph.
\newblock \emph{Random Structures Algorithms} 32 (2008), no. 4, 401--439. \href{http://www.ams.org/mathscinet-getitem?mr=MR2422388}{\textcolor{blue}{MR2422388}}

\bibitem{DSC}
Diaconis, P. and Saloff-Coste, L., Moderate growth and random walk on finite groups. \emph{Geom. Funct. Anal.} 4 (1994), no. 1, 1--36. \href{http://www.ams.org/mathscinet-getitem?mr=MR1254308}{\textcolor{blue}{MR1254308}}

\bibitem{DPRZ}
Dembo, A., Peres, Y., Rosen, J., and Zeitouni, O., 
\newblock Cover times for Brownian motion and random walks in two dimensions. \newblock \emph{Ann. of Math.} (2) 160 (2004), no. 2, 433--464. \href{http://www.ams.org/mathscinet-getitem?mr=MR2123929}{\textcolor{blue}{MR2123929}}

\bibitem{DS}
Diaconis, P., and Saloff-Coste, L. 
\newblock Walks on generating sets of groups.
\newblock \emph{Invent. Math.} 134 (1998), no. 2, 251--299. \href{http://www.ams.org/mathscinet-getitem?mr=MR1650316}{\textcolor{blue}{MR1650316}} 
\bibitem{Ding}
Ding, J.,
\newblock Asymptotics of cover times via Gaussian free fields: bounded-degree graphs and general trees.
\newblock \emph{Ann. Probab.} 42 (2014), no. 2, 464--496. \href{http://www.ams.org/mathscinet-getitem?mr=MR3178464}{\textcolor{blue}{MR3178464}}

\bibitem{Ding2}
Ding, J.,
\newblock On cover times for 2D lattices.
\newblock \emph{Electron. J. Probab.} 17 (2012), no. 45, 18 pp. \href{http://www.ams.org/mathscinet-getitem?mr=MR2946152}{\textcolor{blue}{MR2946152}} 

\bibitem{DLP}
Ding, J., Lee, J.\ R., and Peres, Y. 
\newblock Cover times, blanket times, and majorizing measures. 
\newblock \emph{Ann. of Math.} (2) 175 (2012), no. 3, 1409--1471. \href{http://www.ams.org/mathscinet-getitem?mr=MR2912708}{\textcolor{blue}{MR2912708}} 
\bibitem{DZ}
Ding, J., and Zeitouni, O.,
\newblock A sharp estimate for cover times on binary trees.
\newblock \emph{Stochastic Process. Appl.} 122 (2012), no. 5, 2117--2133. \href{http://www.ams.org/mathscinet-getitem?mr=MR2921974}{\textcolor{blue}{MR2921974}}

\bibitem{Feige1}
Feige, U.,
\newblock A tight lower bound on the cover time for random walks on graphs. \newblock \emph{Random Structures Algorithms} 6 (1995), no. 4, 433--438. \href{http://www.ams.org/mathscinet-getitem?mr=MR1368844}{\textcolor{blue}{MR1368844}}

\bibitem{Feige2}
Feige, U.,
\newblock A tight upper bound on the cover time for random walks on graphs. \newblock \emph{Random Structures Algorithms} 6 (1995), no. 1, 51--54. \href{http://www.ams.org/mathscinet-getitem?mr=MR1368834}{\textcolor{blue}{MR1368834}}

\bibitem{hermontech}
Hermon, J.,
\newblock A technical report on hitting times, mixing and cutoff.
\newblock \emph{ALEA Lat. Am. J. Probab. Math. Stat.} 15 (2018), no. 1, 101--120. \href{http://www.ams.org/mathscinet-getitem?mr=MR3765366}{\textcolor{blue}{MR3765366}}

\bibitem{hermoninter}
Hermon, J.,
\newblock Intersection times for critical branching random walk. (2019),
\newblock Arxiv preprint \href{https://arxiv.org/abs/1908.08525}{\textcolor{blue}{arXiv:1908.08525}}

\bibitem{HLP}
Hermon, J., Lacoin, H., and Peres, Y.,
\newblock Total variation and separation cutoffs are not
equivalent and neither one implies the other. \emph{Electron. J. Probab.} 21 (2016), no. 44, 36 pp. \href{http://www.ams.org/mathscinet-getitem?mr=MR3530321}{\textcolor{blue}{MR3530321}}

\bibitem{L2}
Hermon, J., and Peres, Y.,
\newblock A characterization of $L_2$ mixing and hypercontractivity via hitting times and maximal inequalities.
\newblock \emph{Probab. Theory Related Fields} 170 (2018), no. 3-4, 769--800. \href{http://www.ams.org/mathscinet-getitem?mr=MR3773799}{\textcolor{blue}{MR3773799}}

%
%
%
%
%
%
%
%


\bibitem{1/2}
Griffiths, S., Kang, R., J., Oliveira, R., Patel, V.,
\newblock Tight inequalities among set hitting times in Markov chains.
\newblock \emph{Proc. Amer. Math. Soc.} 142 (2014), no. 9, 3285--3298. 
\href{http://www.ams.org/mathscinet-getitem?mr=MR3223383}{\textcolor{blue}{MR3223383}} 









\bibitem{Kahn}
Kahn, J., Kim, J.\ H., Lov\' asz, L., and Vu, V.\ H.,
\newblock The cover time, the blanket time, and the Matthews bound.
\newblock 41st Annual Symposium on Foundations of Computer Science, 467--475, IEEE Comput. Soc. Press, Los Alamitos, CA, 2000. \href{http://www.ams.org/mathscinet-getitem?mr=MR1931843}{\textcolor{blue}{MR1931843}} 

\bibitem{LPW} Levin, D., and Peres, Y., (2017).
 \newblock {\em Markov chains and mixing times.} 
 American Mathematical Society, Providence, RI. With contributions by Elizabeth L.\ Wilmer and a chapter by James G.\ Propp and David B.\ Wilson. \href{http://www.ams.org/mathscinet-getitem?mr=MR3726904}{\textcolor{blue}{MR3726904}}

\bibitem{LW}
Lov{\'a}sz, L., and Winkler, P.,
\newblock Mixing times.
\newblock {\em Microsurveys in discrete probability} 41:85--134, 1998. \href{http://www.ams.org/mathscinet-getitem?mr=MR1630411}{\textcolor{blue}{MR1630411}} 
\bibitem{LP}
Lyons, R., and Peres, Y., \emph{Probability on trees and networks}. Cambridge Series in Statistical and Probabilistic Mathematics, 42. Cambridge University Press, New York, 2016. \href{http://www.ams.org/mathscinet-getitem?mr=MR3616205}{\textcolor{blue}{MR3616205}} 

\bibitem{Matthews}
Matthews, P., 
\newblock Covering problems for {M}arkov chains.
\newblock \emph{Ann. Probab.} 16 (1988), no. 3, 1215--1228. \href{http://www.ams.org/mathscinet-getitem?mr=MR0942764}{\textcolor{blue}{MR0942764}}

\bibitem{MP}
Miller, J., and Peres, Y,
\newblock Uniformity of the uncovered set of random walk and cutoff for lamplighter chains.
\newblock \emph{Ann. Probab.} 40 (2012), no. 2, 535--577. \href{http://www.ams.org/mathscinet-getitem?mr=MR2952084}{\textcolor{blue}{MR2952084}}


\bibitem{Olivehit}
Oliveira, R.,
\newblock  Mixing and hitting times for finite Markov chains.
\newblock \emph{Electron. J. Probab.} 17 (2012), no. 70, 12 pp. \href{http://www.ams.org/mathscinet-getitem?mr=MR2968677}{\textcolor{blue}{MR2968677}}



\bibitem{PS}
Peres, Y., and Sousi, P.,
\newblock Mixing times are hitting times of large sets.
\newblock \emph{J. Theoret. Probab.} 28 (2015), no. 2, 488--519. \href{http://www.ams.org/mathscinet-getitem?mr=MR0942764}{\textcolor{blue}{MR3370663}}

\bibitem{TT} Tessera, R., and Tointon, M.,
\newblock A finitary structure theorem for vertex-transitive graphs of polynomial growth. (2019),
 \newblock Arxiv preprint \href{https://arxiv.org/abs/1908.06044}{\textcolor{blue}{arXiv:1908.06044}}

\bibitem{TT2} Tessera, R., and Tointon, M., \newblock Sharp relations between volume growth, isoperimetry and resistance in vertex-transitive graphs. \newblock In preparation (2019+)

\bibitem{Zhai}
Zhai, A., 
\newblock Exponential concentration of cover times.
\newblock \emph{Electron. J. Probab.} 23 (2018), Paper No. 32, 22 pp. \href{http://www.ams.org/mathscinet-getitem?mr=MR3785402}{\textcolor{blue}{MR3785402}}


\end{thebibliography}
\end{document}